\documentclass{amsart}
\usepackage{amsmath}
\usepackage{amssymb}
\usepackage{epsfig}
\usepackage{url}
\usepackage{hyperref}

\usepackage{enumerate}

\makeatletter
\@namedef{subjclassname@2010}{%
  \textup{2010} Mathematics Subject Classification}
\makeatother

\numberwithin{equation}{section}

\title{ERGODICITY OF $\Z^2$ EXTENSIONS OF IRRATIONAL ROTATIONS }

\author{Yuqing Zhang}

\address{ESI,
Boltzmanngasse 9,
A-1090 Vienna, Austria}

\email{\url{zhangy6@univie.ac.at}}

\newcommand{\R}{{\mathbb{R}}}

\newcommand{\Z}{{\mathbb{Z}}}

\newcommand{\N}{{\mathbb{N}}}

\newcommand{\E}{{\mathbb E}}

\newcommand{\T}{\mathbf{T}}

\newcommand{\x}{{\mathbf{X}}}
\providecommand{\abs}[1]{\left\lvert#1\right\rvert}
\providecommand{\norm}[1]{\left\lVert#1\right\rVert}

\newtheorem{theorem}{Theorem}[section]

\newtheorem{lem}[theorem]{Lemma}

\newtheorem{prop}[theorem]{Proposition}

\newtheorem{defi}[theorem]{Definition}

\newtheorem{remark}[theorem]{Remark}

\begin{document}

\begin{abstract}

Let $\T=[0,1)$ be the additive group of real numbers modulo $1$, $\alpha \in \T$ be an irrational number and
$t \in \T$.
We consider skew product extensions of irrational rotations by  $\Z^2$ determined by
$T \colon \T\times \Z^2 \to  \T\times \Z^2\quad$
$T(x,s_1,s_2)=\left(x+\alpha,\quad s_1+2\chi_{[0,\tfrac{1}{2})}(x)-1,\quad s_2+2\chi_{[0,\tfrac{1}{2})}(x+t)-1\right)$.
We study ergodic components of such extensions and use the results to  display irregularities in the uniform distribution of the sequence
 $\Z\alpha$.

\end{abstract}

\subjclass[2010]{Primary 37; Secondary 11}

\keywords{Cocycle, Ergodicity, Irrational rotation}

\maketitle

\section{Introduction}

The study of irrational rotations of the circle  leads to various questions in number theory and ergodic theory.
Let $\T=[0,1)$ be the additive group of real numbers modulo 1. Fix an  irrational $\alpha \in \T$  and let $t \in \T$
satisfy the condition that
neither $t$ nor $t+\tfrac{1}{2}$ is a multiple of $\alpha \mod 1$ .  Define a  map $f\colon \T\to
\Z$  by
\begin{equation}\label{fdefi}
f(x)=\left\{\begin{array}{c}
       1 \quad \text{for} \quad  0\leq x <\frac{1}{2};\\
       -1 \quad \text{for}  \quad \frac{1}{2}\leq x< 1
     \end{array}\right.
\end{equation}
and an irrational  rotation $T_0$ of $\T$ by
\begin{equation}
T_0x=x+\alpha \mod 1.
\end{equation}
Set $\x=\T\times \Z^2$ and define $T \colon \x \to  \x$ by
\begin{equation}\label{tdefi}
T(x,s_1,s_2)=\left(x+\alpha,\quad s_1+f(x),\quad s_2+f(x+t)\right).
\end{equation}
 $T$ is a skew product extension of irrational rotations on the circle by  $\Z^2$ determined by $f(x)$ and $t$.
  We  study ergodicity of $T$ on $\x$ relative to Haar measure, continuing a theme started by \cite{K1}, \cite{K2} of Schmidt and by \cite{V} of Veech.
  It is known that such property of skew product extensions of irrational rotations arises from irregularity of distribution of
$\Z\alpha$. As for the case of cylinder flows,
Oren in \cite{O} gave complete solution to the problem of  ergodicity of the map $F\colon \T\times E\to  \T\times E$ defined by
 $F(x,s)=(x+\alpha,s+\mathbf{1}_{[0,\beta)}(x)-\beta)$,
where $\beta \in \T$ and $E$ is the closed subgroup of $\R$ generated by 1 and $\beta$. Earlier, special cases were done by
Schmidt for $\beta=\tfrac{1}{2}$, $\alpha=\tfrac{\sqrt{5}-1}{4}$ in \cite{K2} and for $\beta=\tfrac{1}{2}$, $\alpha$ irrational in \cite{K1}.
Although ergodicity of cylinder flows has been understood thoroughly, due to the fact that $f(x)$ and $f(x+t)$ take on
 independent values,  the situation of $\Z^2$ extensions of irrational rotations appear to be more complicated.

Note that by definition \eqref{tdefi}, we have
\begin{equation}\label{tndefi}
T^n(x,s_1,s_2)=\left(x+n\alpha,\quad s_1+a_n(x),\quad s_2+a_n(x+t)\right), \quad \forall n \in \Z,
\end{equation}
where
\begin{equation}\label{an}
a_n(x)=\left\{\begin{split}
       &\sum_{i=0}^{n-1}f\left(x+i\alpha\right)=2\sum_{i=0}^{n-1}\chi_{[0,\frac{1}{2})}\left(x+i\alpha\right)-n, \quad \forall n \geq 1;\\
       &0, \quad  \text{for}   \quad  n=0;\\
       &-a_{-n}(T_0^{-n}x),\quad \forall n \leq -1.
     \end{split}\right.
\end{equation}
$t\in \Z\alpha$ and $t\in \Z\alpha + \tfrac{1}{2}$ are  excluded \emph{a priori}. To see this, note that for
nonnegative integer $m$, $|a_n(x+m\alpha)-a_n(x)|$ is bounded by  $2m$ because
\begin{align}\label{bound}
\left|a_n(x+m\alpha)-a_n(x)\right|&=\left|\sum_{i=0}^{m-1}f\left(x+n\alpha+i\alpha\right)-\sum_{i=0}^{m-1}f\left(x+i\alpha\right)\right|\\
&\leq \sum_{i=0}^{m-1}\left|f\left(x+n\alpha+i\alpha\right)\right|+\sum_{i=0}^{m-1}\left|f\left(x+i\alpha\right)\right|\leq 2m, \quad \forall n>m.
\end{align}
We also have from \eqref{fdefi} $f(x+\tfrac{1}{2})=-f(x)$ and therefore
\begin{equation}\label{symm}
a_n(x+\frac{1}{2})=-a_n(x), \quad \forall x \in \T, \quad \forall n.
\end{equation}
$|a_n(x+\tfrac{1}{2}+m\alpha)+a_n(x)|$ is bounded from above by  $2m$ thereof.

Also  note that $a_n(x+t) \equiv a_n(x) \mod 2$. The parity $a_n(x)$ is always the same as that of $n$ from \eqref{an}. Hence $T$
cannot be ergodic on the entire space $\x$.
We set $G=\{(s_1,s_2) \in \Z^2 \mid s_1\equiv s_2\mod 2\}$. $G$ is cocompact in $\Z^2$.

$a_n(x)$ satisfies the  additive cocycle equation
\begin{equation}\label{cocycle}
a_n\left(T_0^mx\right)-a_{n+m}(x)+a_m(x)=0, \quad \forall m,n \in \Z, \quad \forall x \in \T.
\end{equation}
Following  \cite[Definition 2.1]{K1} we have

\begin{defi}
 $\left(a,t\right)\colon \Z \times \T \to \Z^2$  defined by
\begin{equation}\label{cocycle2}
\left(a,t\right)(n,x)=(a_n(x),\quad a_n(x+t))
\end{equation}
is called a cocycle for $T_0$.
\end{defi}

\cite{K1} showed that ergodicity of $T$,  or equivalently,  ergodicity of the cocycle $(a,t)$ is determined  by the group $\E^2\left(a,t\right)$ of essential values of $(a,t)$.
Put $\overline{\Z^2}=\Z^2 \bigcup \{\infty\}$, the one point compactification of $\Z^2$.
We have the following definitions of essential values etc.

\begin{defi}
 Let $\mu$ be  Lebesgue  measure on $\T$. An element $(k_1,k_2) \in \overline{\Z^2}$ is called an essential value of $(a,t)$ if for every measurable set $ A \subset \T$ with $\mu(A)>0$, we have
\begin{equation}
\mu\left(\bigcup_{n \in \Z}\left( A\bigcap T_0^{-n}A\bigcap \left\{x\mid a_n(x)=k_1\right\}\bigcap\left\{x\mid a_n(x+t)=k_2\right\}\right)\right)>0,
\end{equation}
 We denote the set of essential values of $(a,t)$ by $\overline{\E^2}\left(a,t\right)$. 
\end{defi}

\begin{defi}
Set
$\E^2\left(a,t\right)=\overline{\E^2}\left(a,t\right)\bigcap \Z^2$. $(k_1,k_2) \in \overline{\E^2}\left(a,t\right)\setminus \E^2\left(a,t\right)$
only if $(k_1,k_2)$ does not lie in any compact subset of $\Z^2$.
\end{defi}

From \cite{K1} we derive the following properties
\begin{enumerate}
\item $\E^2\left(a,t\right)$ is  a closed  subgroup of $\Z^2$ under addition.
 $(k_1,k_2) \in \E^2\left(a,t\right)$ only if $k_1\equiv k_2 \mod 2$.

\item  $\left(a,t\right)$ is a coboundary (that is, $a_n(x)=c(T_0^{n}x)-c(x)$ for a measurable map $c \colon \T \to \Z)$ iff
$\overline{\E^2}\left(a,t\right)=\{(0,0)\}$.

 \end{enumerate}

We say that two cocycles $ \left(a,t\right), \left(b,t\right)\colon \Z \times \T \to \Z^2$  are cohomologous
 if $\left(a,t\right)-\left(b,t\right)$ is
a coboundary. In this case $\overline{\E^2}\left(a,t\right)=\overline{\E^2}\left(b,t\right)$.
Given a cocycle $ \left(a,t\right) \colon \Z \times \T \to \Z^2$, let
 $ \left(a,t\right)^{*} \colon \Z \times \T \to \Z^2/\E^2\left(a,t\right)$ be the corresponding quotient cocycle. We have
 the following important result from \cite[Lemma 3.10]{K1}:

 \begin{lem}
 $\E^2\left(a,t\right)^{*}=\{(0,0)\}$.
 \end{lem}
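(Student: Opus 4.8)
The plan is to establish the slightly more precise statement that $\E^2(a,t)^{*}=\{(0,0)\}$ directly from the definition of essential value, with no appeal to structure theory. Write $H=\E^2(a,t)$ and let $\pi\colon\Z^2\to\Z^2/H$ be the quotient homomorphism, so the quotient cocycle is $(a,t)^{*}(n,x)=\pi\bigl(a_n(x),a_n(x+t)\bigr)$. Since $(0,0)$ is an essential value of every cocycle (take $n=0$ in the defining condition: the $n=0$ term of the union is $A$ itself, because $a_0\equiv 0$), it suffices to show that every element $\bar g=\pi(g)$ of $\E^2(a,t)^{*}$, with $g\in\Z^2$, forces $g\in H$, i.e. $\bar g=(0,0)$.

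So fix $g\in\Z^2$ with $\pi(g)\in\E^2(a,t)^{*}$ and fix a measurable $A\subset\T$ with $\mu(A)>0$; it is enough to produce one $m\in\Z$ with $\mu\bigl(A\cap T_0^{-m}A\cap\{x:(a,t)(m,x)=g\}\bigr)>0$, because having such an $m$ for every positive-measure $A$ says precisely that $g\in\E^2(a,t)=H$. By hypothesis $\mu\bigl(\bigcup_n A\cap T_0^{-n}A\cap\{x:(a,t)^{*}(n,x)=\pi(g)\}\bigr)>0$, and since $\Z^2$ is countable this yields an integer $n_1$, an element $k_1\in H$, and a positive-measure set $A_1\subset A\cap T_0^{-n_1}A$ on which $(a,t)(n_1,\cdot)\equiv g+k_1$, and which in particular satisfies $T_0^{n_1}A_1\subset A$. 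Now $-k_1\in H=\E^2(a,t)$, so $-k_1$ is itself an essential value of $(a,t)$; applying the defining condition to the positive-measure set $T_0^{n_1}A_1\subset A$ produces an integer $n_2$ and a positive-measure set $A_2\subset T_0^{n_1}A_1$ on which $(a,t)(n_2,\cdot)\equiv -k_1$ and with $T_0^{n_2}A_2\subset A$. Finally, on the positive-measure set $T_0^{-n_1}A_2\subset A_1\subset A$ the cocycle identity \eqref{cocycle} (in its vector form $(a,t)(n_1+n_2,x)=(a,t)(n_1,x)+(a,t)(n_2,T_0^{n_1}x)$) gives $(a,t)(n_1+n_2,x)=(g+k_1)+(-k_1)=g$, while $T_0^{n_1+n_2}\bigl(T_0^{-n_1}A_2\bigr)=T_0^{n_2}A_2\subset A$ and $\mu\bigl(T_0^{-n_1}A_2\bigr)=\mu(A_2)>0$. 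Hence $m=n_1+n_2$ works, so $g\in H$ and $\pi(g)=(0,0)$.

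The only point requiring care is the bookkeeping around the defining condition: from ``$h$ is an essential value of $(a,t)$ and $\mu(B)>0$'' one should extract, by setting $B':=B\cap T_0^{-n}B\cap\{(a,t)(n,\cdot)=h\}$, not merely an index $n$ but a positive-measure subset $B'\subset B$ on which $(a,t)(n,\cdot)\equiv h$ \emph{and} with $T_0^{n}B'\subset B$; it is these inclusions $T_0^{n_i}A_i\subset A$ that allow the two orbit segments to be spliced into a single excursion that both starts and ends in $A$. The only structural input is that $\E^2(a,t)$ is a subgroup of $\Z^2$ (property (1) above), so that $-k_1$ is again an essential value — the reduction/Mackey-range machinery of \cite{K1} is not needed for this lemma. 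Conceptually the computation just records that the space of ergodic components of $T$ on $\x$, which is $\Z^2/\E^2(a,t)$ with the translation action of $\Z^2$, has no nontrivial periods, and, as expected for a general fact about cocycles, the argument uses nothing about the particular $f$ or $t$.
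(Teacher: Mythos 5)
Your argument is correct, and it is worth noting that the paper itself offers no proof of this lemma at all: it is simply quoted from Schmidt's lecture notes as \cite[Lemma 3.10]{K1}. What you have written is, in substance, the standard proof of that result, and your bookkeeping is sound: extracting from $\pi(g)\in\E^2\left(a,t\right)^{*}$ a single index $n_1$, a coset representative $g+k_1$ with $k_1\in H=\E^2\left(a,t\right)$, and a positive-measure set $A_1\subset A\cap T_0^{-n_1}A$ (possible because the union over $n$ and the coset $g+H$ are countable); then using that $H$ is a group, so $-k_1$ is again an essential value, to produce a second excursion starting from $T_0^{n_1}A_1\subset A$; and finally splicing the two via the cocycle identity \eqref{cocycle} and measure invariance of $T_0$ to get $m=n_1+n_2$ with $\mu\left(A\bigcap T_0^{-m}A\bigcap\left\{x\mid (a,t)(m,x)=g\right\}\right)>0$, hence $g\in H$. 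The observation that $(0,0)$ lies in the essential-value set via the $n=0$ term of the union is also consistent with the definition as stated in the paper. The only inputs you use, the subgroup property of $\E^2\left(a,t\right)$ and the cocycle equation, are both available in the paper, so your proof makes the cited lemma self-contained, which the paper does not; nothing in your argument depends on the particular $f$, $t$, or on $\alpha$ being irrational beyond what the definitions require.
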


We say that the cocycle $\left(a,t\right)$ is regular if $\overline{\E^2}\left(a,t\right)^{*}=\{(0,0)\}$.
 $\left(a,t\right)$ is called nonregular if $\overline{\E^2}\left(a,t\right)^{*}=\{(0,0), \infty\}$.
If $\left(a,t\right)$ is regular, then
$\left(a,t\right)$ is cohomologous to a cocycle $\left(b,t\right)\colon \Z \times \T \to \E^2\left(a,t\right)$ and the latter is ergodic as a
cocycle with values in the closed subgroup $\E^2\left(a,t\right)$ (see \cite{K1}).
In particular, if $\E^2\left(a,t\right)$ is cocompact  in $\Z^2$ then $\left(a,t\right)$ is regular.

We utilize approach  devised in \cite{K1}, \cite{O} to prove the following theorems:

\begin{theorem}\label{theorem1}
For arbitrary irrational $\alpha \in \T$,  the group of essential values
  $\E^2\left(a,t\right)$ of the cocycle $\left(a,t\right)$  defined in \eqref{cocycle2} is $G=\{(s_1,s_2) \in \Z^2 \mid s_1\equiv s_2\mod 2\}$
   for almost all $t \in \T$. In particular,
 $\left(a,t\right)$ is  regular for almost all $t \in \T$.
 \end{theorem}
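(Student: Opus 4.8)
The plan is to prove that $G\subseteq\E^2(a,t)$ for almost every $t$; since $a_n(x)\equiv a_n(x+t)\equiv n\pmod 2$ forces $\E^2(a,t)\subseteq G$ for every admissible $t$, and $G$ is cocompact in $\Z^2$, this at once yields $\E^2(a,t)=G$ and regularity. As $\E^2(a,t)$ is a subgroup and $(1,1)$ together with $(0,2)$ generate $G$ (the subgroup they generate is contained in $G$ and has index $2$ in $\Z^2$, hence equals $G$), it suffices to show that $(1,1)$ and $(0,2)$ lie in $\E^2(a,t)$ for a.e.\ $t$. Two symmetries will be used to move values around: from $a_n(x+\tfrac{1}{2})=-a_n(x)$ (see \eqref{symm}) one gets $\overline{\E^2}(a,t+\tfrac{1}{2})=\{(s_1,-s_2):(s_1,s_2)\in\overline{\E^2}(a,t)\}$, and conjugating by the rotation by $t$, which commutes with $T_0$, gives $\overline{\E^2}(a,-t)=\{(s_2,s_1):(s_1,s_2)\in\overline{\E^2}(a,t)\}$; since $G$ is invariant under both of these automorphisms of $\Z^2$, it is enough to prove $(1,1),(2,0)\in\E^2(a,t)$ for a.e.\ $t$.

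The engine is the behaviour of the cocycle along the denominators $q_k$ of the convergents of $\alpha$, in the spirit of the cylinder-flow arguments of \cite{K1} and \cite{O}. I will use three facts. (i) $q_k\alpha\to 0\pmod 1$, so $T_0^{q_k}$ tends to the identity in measure and $\mu(A\cap T_0^{-q_k}A)\to\mu(A)$ for every measurable $A$. (ii) By the Denjoy--Koksma inequality $\|a_{q_k}\|_\infty$ is bounded by an absolute constant, and by the three-distance theorem $a_{q_k}$ is a step function with $O(q_k)$ intervals of constancy of lengths comparable to $1/q_k$, so each of its level sets equidistributes in $\T$. (iii) By \eqref{symm} the level sets $\{a_{q_k}=c\}$ and $\{a_{q_k}=-c\}$ have equal measure, and since by Schmidt's theorem \cite{K1} the single cocycle $a$ has $\E(a)=\Z$ (so $a$ is not a coboundary), $a_{q_k}$ attains a value of absolute value $1$ when $q_k$ is odd, and a value of absolute value $2$ when $q_k$ is even, on a set whose measure is bounded below along some subsequence of $k$; passing from $q_k$ to $q_k+q_{k-1}$ or to $2q_k$ when necessary lets me prescribe the parity of the time, which is what makes the odd value $1$ reachable for the target $(1,1)$.

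The essential-value criterion is applied in the following form: if, for a given positive-measure set $A$, there is a time $n$ with $n\alpha$ small and a positive-measure $A'\subseteq A$ with $T_0^nA'\subseteq A$ and $(a_n,a_n(\cdot+t))\equiv(k_1,k_2)$ on $A'$, then $(k_1,k_2)\in\E^2(a,t)$. Take $n=q_k$. The level sets of $x\mapsto a_{q_k}(x+t)$ are the $(-t)$-translates of those of $a_{q_k}$, so on the overlap $\{a_{q_k}=k_1\}\cap(\{a_{q_k}=k_2\}-t)$ the pair $(a_{q_k}(x),a_{q_k}(x+t))$ equals $(k_1,k_2)$. For the diagonal target $(1,1)$ one needs $\{a_{q_k}=1\}\cap(\{a_{q_k}=1\}-t)$ to be non-null; integrating in $t$ gives $\int_\T\mu\big(\{a_{q_k}=1\}\cap(\{a_{q_k}=1\}-t)\big)\,dt=\mu(\{a_{q_k}=1\})^2$, which stays bounded below along the subsequence from (iii). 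A Borel--Cantelli argument along a sufficiently sparse and compatible subsequence of $k$, combined with the equidistribution in (ii), then shows that for a.e.\ $t$ this overlap has measure bounded below for infinitely many $k$ and, being a union of $O(q_k)$ intervals of length comparable to $1/q_k$, meets $A$ in positive measure; together with (i) this produces the required $A'$. The same scheme applied to $\{a_{q_k}=2\}\cap(\{a_{q_k}=0\}-t)$ along even $q_k$ gives $(2,0)$ for a.e.\ $t$. Only countably many null exceptional sets arise, so their union is null.

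The heart of the matter, which I expect to be the main obstacle, is the step ``for a.e.\ $t$, for infinitely many $k$'': one must control the number, the lengths, and above all the relative positions of the intervals of constancy of $a_{q_k}$ sharply enough that the $(-t)$-translated partitions are in general position for a positive-density set of $t$ at infinitely many scales simultaneously. This is where the fine continued-fraction combinatorics behind Oren's method is needed, together with a careful choice of the working subsequence of $k$ (controlling the ratios $q_{k+1}/q_k$, or passing to blocks on which successive level-set partitions refine one another) so that the relevant events become quasi-independent. A secondary technical point is verifying that $a_{q_k}$ genuinely takes the small values $\pm1$ and $\pm2$ on sets of proportion bounded below for infinitely many $k$; this uses the explicit step structure of $a_{q_k}$ rather than the Denjoy--Koksma bound alone, together with the parity juggling between $q_k$, $q_k\pm q_{k-1}$ and $2q_k$.
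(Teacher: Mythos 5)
Your proposal follows a genuinely different route (translate--overlap of level sets of $a_{q_k}$ plus a Borel--Cantelli argument in $t$), but its central step is missing, and you say so yourself. The identity $\int_\T\mu\bigl(\{a_{q_k}=k_1\}\cap(\{a_{q_k}=k_2\}-t)\bigr)\,dt=\mu\{a_{q_k}=k_1\}\,\mu\{a_{q_k}=k_2\}$ is only a first-moment bound: for each fixed $k$ it gives a set of $t$ of measure bounded below on which the overlap is large, but to conclude ``for a.e.\ $t$, for infinitely many $k$'' you need reverse Borel--Cantelli, i.e.\ second-moment/quasi-independence control of these events across scales -- and the level sets of $a_{q_{k+1}}$ are strongly correlated with those of $a_{q_k}$ through the continued-fraction structure, so this is exactly the hard point, not a routine afterthought. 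In addition, input (iii) is not justified: ``$a$ is not a coboundary'' does not give a lower bound on the measure of the \emph{specific} level set $\{a_{q_k}=1\}$ (pigeonhole plus \eqref{symm} only gives that either $\mu\{a_{q_k}=\pm1\}$ or $\mu\{a_{q_k}=\pm3\}$ is $\geq 1/4$, and if the mass sits on $\pm3$ your diagonal target becomes $(3,3)$, which together with $(0,2)$ does not generate $G$ -- you then need both $(2,0)$ and $(0,2)$, returning you to the unproved overlap estimates); for the even target, even denominators $q_k$ need not exist at all, and the parity repairs via $q_k+q_{k-1}$ or $2q_k$ are only sketched and do not come with a lower bound on $\mu\{a_n=2\}$. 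So as it stands the proposal is a plan with the decisive estimate left open.

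The paper avoids both difficulties. It does not try to hit prescribed values: Lemma \ref{first} gets \emph{some} odd--odd essential value by pigeonhole over the at most sixteen value pairs of $(a_q(x),a_q(x+t))$ for odd $q\in D(\alpha)$, with constancy sets of measure $\geq 1/16$ for free. The a.e.-$t$ statement then comes not from an independence argument but from the Diophantine condition \eqref{continued44} on $t$ (full measure for every $\alpha$): by \eqref{continued22}, Lemma \ref{continued33} and \eqref{continued44}, all $4q_n$ intervals of constancy of the pair cocycle have comparable lengths, so the union of intervals lying immediately to the right of the chosen set $A_n$ has measure bounded below, and crossing a single discontinuity changes exactly one coordinate by $\pm2$; this manufactures a second essential value differing from the first in one coordinate by $2$, and the case analysis in Proposition \ref{bouded} then yields $(2,0),(0,2)$ and hence $\E^2(a,t)=G$. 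If you want to salvage your approach, the honest comparison is that your scheme would need precisely the kind of quantitative control over the relative positions of the translated level-set partitions that the paper packages once and for all into $\epsilon(q),\theta(q)$ and condition \eqref{continued44}; adopting that condition as your notion of ``good $t$'' essentially converts your argument into the paper's.
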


\begin{theorem}\label{theorem2}
If $\alpha$ is badly approximable, then  the group of essential values
  $\E^2\left(a,t\right)$ is $G$ if and only if  $t\notin \Z\alpha$ and $t\notin \Z\alpha + \tfrac{1}{2}$.
\end{theorem}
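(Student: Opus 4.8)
The plan is to treat the two implications separately; the forward one is soft. Suppose $t = m\alpha$ for some $m \in \Z$. Then the cocycle identity \eqref{cocycle} gives $a_n(x+t)-a_n(x) = a_n(T_0^m x) - a_n(x) = a_m(T_0^n x) - a_m(x)$, a coboundary with transfer function $a_m$, so $(a,t)$ is cohomologous to the diagonal cocycle $n \mapsto (a_n(x), a_n(x))$. If instead $t = m\alpha + \tfrac{1}{2}$, then \eqref{symm} yields $a_n(x+t) = -a_n(x+m\alpha)$, and the same computation shows $(a,t)$ is cohomologous to $n \mapsto (a_n(x), -a_n(x))$. Since $\overline{\E^2}$ is a cohomology invariant and the single $\Z$-valued cocycle $a$ is ergodic for every irrational $\alpha$ by Schmidt \cite{K1} (equivalently, its group of essential values is all of $\Z$), in these two cases $\E^2(a,t)$ equals $\{(k,k):k\in\Z\}$ or $\{(k,-k):k\in\Z\}$, each a proper subgroup of $G$ (both miss $(2,0)$). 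Hence $\E^2(a,t) \neq G$ whenever $t \in \Z\alpha \cup (\Z\alpha + \tfrac{1}{2})$.

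For the converse, assume $\alpha$ badly approximable and $t \notin \Z\alpha \cup (\Z\alpha + \tfrac{1}{2})$. It suffices to prove $(2,0) \in \E^2(a,t)$ for every such $t$. Indeed, the rotation $x \mapsto x-t$ commutes with $T_0$ and with the $\Z^2$-action and pulls $(a,t)$ back to the coordinate-swap of $(a,-t)$, so $\E^2(a,-t) = \{(q,p):(p,q)\in\E^2(a,t)\}$; applying this with $t$ replaced by $-t$ (again excluded from $\Z\alpha \cup (\Z\alpha + \tfrac{1}{2})$), the membership $(2,0) \in \E^2(a,-t)$ gives $(0,2) \in \E^2(a,t)$. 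Thus $(2\Z)^2 \subseteq \E^2(a,t) \subseteq G$, and since $[G:(2\Z)^2]=2$ only the possibility $\E^2(a,t) = (2\Z)^2$ remains to be excluded. But $(2\Z)^2$ is cocompact in $\Z^2$, so in that case $(a,t)$ would be regular and cohomologous to a cocycle valued in $(2\Z)^2$; reducing the cohomology relation modulo $(2\Z)^2$ and using $a_n(x)\equiv a_n(x+t)\equiv n \pmod{2}$ would display the constant cocycle $n \mapsto (n\bmod 2, n\bmod 2)$ as a coboundary --- impossible, since a measurable $c\colon \T \to \Z/2\Z$ with $c(T_0 x)-c(x)\equiv 1$ would force $T_0$ to interchange two sets of measure $\tfrac{1}{2}$, contradicting ergodicity of $T_0^2$. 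Hence $\E^2(a,t) = G$.

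To produce $(2,0)$ I would use the tower form of the essential-value criterion (as in \cite{K1}, \cite{O}): it is enough to exhibit, at infinitely many scales $k$, a Rokhlin tower of height $n_k$ comparable to the continued-fraction denominator $q_k$, with base an interval $B_k$ of length $\asymp q_k^{-1}$ and $\|n_k\alpha\|$ small, such that $x \mapsto (a_{n_k}(x), a_{n_k}(x+t))$ restricted to $B_k$ attains both $(0,0)$ and $(2,0)$, each on a subinterval of length $\ge \rho\,|B_k|$ for an absolute $\rho>0$; a Lebesgue-density localization then turns this into positive measure of $A \cap T_0^{-n_k}A \cap \{a_{n_k}=2\} \cap \{a_{n_k}(\cdot+t)=0\}$ for arbitrary $A$. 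Here the denominators grow geometrically with $\|q_k\alpha\| \asymp q_k^{-1}$; the sums $a_{n_k}$ are uniformly bounded (Denjoy--Koksma, $\operatorname{Var} f = 4$); and the discontinuity points $\{-i\alpha,\ \tfrac{1}{2} - i\alpha : 0 \le i < n_k\}$ of $a_{n_k}$ are pairwise $\gg q_k^{-1}$ apart --- for the cross distances this is where badly approximable $\alpha$ enters, via $\|\tfrac{1}{2} - m\alpha\| = \tfrac{1}{2}\|2m\alpha\| \gg q_k^{-1}$ for $0<|m|\lesssim q_k$ --- so $a_{n_k}$ is piecewise constant on $\asymp q_k$ intervals of length $\asymp q_k^{-1}$ with consecutive values differing by $2$; since $\int a_{n_k}=0$ its bulk value is $0$, and choosing $n_k$ even with $n_k \asymp q_k$ and $a_{n_k}$ non-constant (such $n_k$ exist at infinitely many scales because $a$ is not a coboundary) the value $2$ is attained.

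The genuine obstacle is forcing $a_{n_k} \equiv 2$ and $a_{n_k}(\cdot+t)\equiv 0$ on one and the same subinterval of proportional length. For the first constraint alone this is Schmidt's ergodicity of $a$ \cite{K1}; the new ingredient is that the discontinuity pattern of $a_{n_k}(\cdot+t)$ is the $t$-translate of that of $a_{n_k}$, and one must show the two patterns decouple uniformly over the range $|i| \lesssim q_k$. This is precisely what the hypothesis $t \notin \Z\alpha \cup (\Z\alpha + \tfrac{1}{2})$ buys: when $t \in \Z\alpha$ the difference $a_n(\cdot+t)-a_n$ is bounded, and when $t \in \Z\alpha+\tfrac{1}{2}$ the sum $a_n(\cdot+t)+a_n$ is bounded --- the exact degeneracies exploited in the forward direction --- so the two cocycles would be locked together. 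Making the decoupling quantitative and uniform calls for a continued-fraction renormalization and bootstrapping in the spirit of \cite{O}: track how the pair $(a,t)$ transforms under successive Ostrowski tower refinements, check that the non-resonance of $t$ persists, and extract at infinitely many levels the configuration above, the geometric growth of the $q_k$ and the separation estimates making the recursion converge. I expect essentially all of the work of Theorem \ref{theorem2} to be concentrated in this renormalization step; granting it, $(2,0)$ and hence $\E^2(a,t)=G$ follow by the reduction of the second paragraph.
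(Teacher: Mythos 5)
The soft parts of your argument are fine: the forward implication (for $t\in\Z\alpha$ or $t\in\Z\alpha+\tfrac{1}{2}$ the cocycle is cohomologous to the diagonal, resp.\ antidiagonal, cocycle, so $\E^2\left(a,t\right)\neq G$ — this matches the paper's a priori observation that $a_n(x+t)\mp a_n(x)$ is bounded in these cases), and the reduction of the converse to producing $(2,0)\in\E^2\left(a,t\right)$ (the $t\mapsto -t$ coordinate swap, and the index-two argument ruling out $\E^2\left(a,t\right)=(2\Z)^2$ via regularity and the mod-$2$ parity cocycle, are both valid given the facts quoted from \cite{K1}; note Lemma \ref{first} would exclude $(2\Z)^2$ even more directly, since it puts an element with odd coordinates into $\E^2\left(a,t\right)$). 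The problem is that the production of $(2,0)$ — which is the entire content of the theorem, and the only place where badly approximable $\alpha$ and the non-resonance of $t$ are actually used — is not proved. Your last two paragraphs are a plan, not an argument: ``I would use\dots'', ``calls for a continued-fraction renormalization\dots'', ``granting it\dots''. That deferred ``renormalization step'' is precisely the mathematical core, so the proof has a genuine gap.

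Concretely, what is missing is the dichotomy the paper runs through the quantities $\epsilon(q)$ and $\theta(q)$ of \eqref{smallest}, which measure, on the scale $1/q$, how close $-t$ and $\tfrac{1}{2}-t$ come to the points $j\alpha$, $|j|<q$. Proposition \ref{bouded} shows that if $\min\{\epsilon(q),\theta(q)\}$ stays bounded away from $0$ along a subsequence of denominators, then all $4q$ intervals of joint constancy of $a_q(\cdot)$ and $a_q(\cdot+t)$ have length comparable to $1/q$ (via Lemma \ref{continued33}, \eqref{continued22} and Lemma \ref{denjoy}), so passing to the right-neighbouring intervals changes exactly one coordinate of $\left(a_q(x),a_q(x+t)\right)$ by $\pm 2$ on a set of measure bounded below — this is the quantitative ``decoupling'' that yields $(2,0)$ and $(0,2)$. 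Proposition \ref{bouded2} then shows that for badly approximable $\alpha$, $\min\{\epsilon(q),\theta(q)\}\to 0$ forces $t\in\Z\alpha$ or $t\in\Z\alpha+\tfrac{1}{2}$ (Lemma \ref{continued33} prevents $\epsilon$ and $\theta$ from being small simultaneously, and the minimizing index $i_q$ or $j_q$ then stabilizes). Your sketch establishes neither half: it never proves the needed lower bounds on $\left\|-t-j\alpha\right\|$ and $\left\|\tfrac{1}{2}-t-j\alpha\right\|$ over $|j|<q$ (for an arbitrary non-resonant $t$ these can be $o(1/q)$ along some scales, so the claimed proportional-length subinterval on which $a_{n_k}\equiv 2$ and $a_{n_k}(\cdot+t)\equiv 0$ need not exist at a given scale), and your phrase ``check that the non-resonance of $t$ persists'' is exactly the statement of Proposition \ref{bouded2}, left unproved. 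A further local flaw: you choose $n_k$ even with $n_k\asymp q_k$ but not necessarily a continued-fraction denominator, in which case Denjoy--Koksma gives no bound on $a_{n_k}$ and the uniform separation of its discontinuities fails; the paper instead works with odd denominators themselves (Lemma \ref{first}), for which $a_q$ takes only the values $\pm 1,\pm 3$, and obtains the even generators $(2,0),(0,2)$ by the neighbouring-interval argument rather than by choosing an even time.
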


\section{ Period approximating sequences, Partial convergents and other preliminaries}

 For $x \in \R$ we denote  the closest integer to $x$ by $[x]$, denote $x-[x]$ by $\langle x\rangle$
and denote $|x-[x]|$ by $\lVert x \rVert$. We assume $n$ to be nonnegative.

According to \eqref{an} $a_n(x)$ is locally constant except for points of discontinuities of $+2$ at $0,-\alpha, -2\alpha, \dotsc, -(n-1)\alpha$ and
points of discontinuities of $-2$ at
$\tfrac{1}{2}, \tfrac{1}{2}-\alpha, \tfrac{1}{2}-2\alpha, \dotsc, \tfrac{1}{2}-(n-1)\alpha$.
$a_n(x+t)$ is locally constant except for points of discontinuities of $+2$ at $-t, -t-\alpha, -t-2\alpha, \dotsc, -t-(n-1)\alpha$ and
points of discontinuities of $-2$ at
$\tfrac{1}{2}-t, \tfrac{1}{2}-t-\alpha,  \dotsc, \tfrac{1}{2}-t-(n-1)\alpha$.

If we set
\begin{equation}
S_n(x)=\sum_{i=0}^{n-1}\chi_{[0,\frac{1}{2})}\left(x+i\alpha\right)=\#
\left\{i\mid 0\leq i \leq n-1; \quad x+i\alpha \in [0,\frac{1}{2})\right\},
\end{equation}
then from \eqref{an}
\begin{equation}
a_n(x)=2S_n(x)-n.
\end{equation}
The concept of essential values corresponds to that of periods in \cite{O}. We have the following definition:

\begin{defi}
A period approximating sequence is a sequence $\left\{(n_l,A_l)\right\}_{l=1}^\infty$ where
\begin{enumerate}
\item $A_l \subset \T$, each $A_l$ is measurable;
\item $a_{n_l}$ is constant on  both $A_l$ and $A_l+t$, that is, $a_{n_l}(A_l)=k_1, a_{n_l}(A_l+t)=k_2 \quad \forall n_l$;
\item $\inf_l \mu(A_l)>0$;
\item $\norm{n_l\alpha} \to 0$.
\end{enumerate}
\end{defi}

The next lemma shows that a period approximating sequence defines  an element in $\E^2\left(a,t\right)$.

\begin{lem}
If there exists a period approximating sequence $\left\{(n_l,A_l)\right\}_{l=1}^\infty$ such that $a_{n_l}(A_l)=k_1$, $a_{n_l}(A_l+t)=k_2$, $\forall n_l$,
then $(k_1,k_2) \in \E^2\left(a, t\right)$.
\end{lem}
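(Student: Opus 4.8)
The plan is to verify the definition of essential value directly. Fix an arbitrary measurable $B\subset\T$ with $\mu(B)>0$; I must produce an integer $n$ with
\[
\mu\bigl(B\cap T_0^{-n}B\cap\{x:a_n(x)=k_1\}\cap\{x:a_n(x+t)=k_2\}\bigr)>0,
\]
and it will turn out that $n=n_l$ works for a suitable large $l$. Put $\delta\df\inf_l\mu(A_l)>0$. I shall lean on two soft observations. (i) Since $\norm{n_l\alpha}\to0$ and translation is continuous on $L^1(\T)$, one has $\mu\bigl(C\,\triangle\,T_0^{-n_l}C\bigr)\to0$ as $l\to\infty$ for every \emph{fixed} measurable $C\subset\T$. (ii) For every fixed $p\in\Z$ the function $a_p$ is locally constant off a set of at most $2\abs{p}$ points; hence $a_p(T_0^{n_l}y)=a_p(y)$, and likewise with $y$ replaced by $y+t$, for all $y$ outside an exceptional set of measure $O(\abs{p}\cdot\norm{n_l\alpha})\to0$.

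The key step — and, I expect, the only genuinely delicate one — is a \emph{transport lemma}: a period approximating sequence for $(k_1,k_2)$ may be pushed by an arbitrary fixed rotation $T_0^p$ while keeping both the value $(k_1,k_2)$ and, along a tail, the lower bound $\delta/2$ on measure. Indeed, by the cocycle relation \eqref{cocycle}, for $y\in A_l$ and $z=T_0^py$ one has $a_{n_l}(z)=a_{n_l}(y)+\bigl(a_p(T_0^{n_l}y)-a_p(y)\bigr)$, and since $z+t=T_0^p(y+t)$ also $a_{n_l}(z+t)=a_{n_l}(y+t)+\bigl(a_p(T_0^{n_l}(y+t))-a_p(y+t)\bigr)$. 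Let $\tilde G_l\df\{y:a_p(T_0^{n_l}y)=a_p(y)\text{ and }a_p(T_0^{n_l}(y+t))=a_p(y+t)\}$; by (ii), $\mu(\T\setminus\tilde G_l)\to0$. For $y\in A_l\cap\tilde G_l$ the bracketed corrections vanish, so on $T_0^p(A_l\cap\tilde G_l)$ we get $a_{n_l}\equiv k_1$ and $a_{n_l}(\cdot+t)\equiv k_2$, while $\mu\bigl(T_0^p(A_l\cap\tilde G_l)\bigr)=\mu(A_l\cap\tilde G_l)\ge\delta-\mu(\T\setminus\tilde G_l)\ge\delta/2$ for $l$ large. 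Thus the shifted sequence is again a period approximating sequence for $(k_1,k_2)$. The point is that the naive attempt fails because the $A_l$ may be disjoint from the prescribed $B$ for every $l$, and this is exactly what the finiteness of the discontinuity set of $a_p$, combined with $\norm{n_l\alpha}\to0$, repairs.

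To conclude, I would use ergodicity of $T_0$: since $\mu(B)>0$ the set $\bigcup_{m}T_0^mB$ is conull, so fix $M$ with $\mu\bigl(\bigcup_{\abs{m}\le M}T_0^mB\bigr)>1-\delta/4$. For each $l$ then $\sum_{\abs{m}\le M}\mu(A_l\cap T_0^mB)\ge3\delta/4$, so some $m(l)$ with $\abs{m(l)}\le M$ satisfies $\mu(A_l\cap T_0^{m(l)}B)\ge c\df\frac{3\delta}{4(2M+1)}$; as $m(l)$ ranges over a finite set, some value $m_0$ recurs along an infinite set $L$ of indices $l$. Apply the transport lemma with $p=-m_0$ and set $\tilde A_l\df T_0^{-m_0}(A_l\cap\tilde G_l)$; then $\tilde A_l\subset\{x:a_{n_l}(x)=k_1\}\cap\{x:a_{n_l}(x+t)=k_2\}$, and for $l\in L$ large $\mu(\tilde A_l\cap B)=\mu\bigl((A_l\cap\tilde G_l)\cap T_0^{m_0}B\bigr)\ge c-\mu(\T\setminus\tilde G_l)\ge c/2$. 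Finally, by (i) applied to the fixed set $B$,
\[
\mu\bigl(B\cap T_0^{-n_l}B\cap\tilde A_l\bigr)\ \ge\ \mu(B\cap\tilde A_l)-\mu\bigl(B\setminus T_0^{-n_l}B\bigr)\ \ge\ \tfrac{c}{2}-o(1)\ >\ 0
\]
for $l\in L$ large, which yields the displayed inequality with $n=n_l$, since $\tilde A_l$ lies in both level sets. As $B$ was arbitrary, $(k_1,k_2)\in\overline{\E^2}(a,t)$; since $k_1,k_2$ are finite integers, $(k_1,k_2)\in\E^2(a,t)$.
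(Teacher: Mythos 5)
Your argument is correct and rests on the same core mechanism as the paper's proof: relate the sets $A_l$ to the arbitrary positive-measure set by a \emph{fixed} power $T_0^{m}$ of the rotation (obtained from ergodicity of $T_0$), and use the cocycle identity together with $\norm{n_l\alpha}\to 0$ and the finiteness of the discontinuity set of $a_m$ to see that the value $(k_1,k_2)$ survives the shift off an exceptional set of vanishing measure. The only differences are bookkeeping: the paper takes $B=\limsup_l A_l$ and maps a positive-measure piece of the given set into $B$, whereas you fix the shift $m_0$ by a finite covering and pigeonhole argument, and you additionally spell out (via translation continuity) the intersection with $T_0^{-n_l}B$, a point the paper leaves implicit.
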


\begin{proof}
Set \[B=\limsup_{l\to\infty} A_l=\bigcap_{l=1}^\infty\bigcup_{i=l}^\infty A_i. \] $\mu(B)>0$ because $\inf_l \mu(A_l)>0$ and $\mu(\T)=1$.

For arbitrary $A \subset \T$ with $\mu(A)>0$,
there exists $m \in \Z$ and $A' \subset A$
such that $\mu(A')>0$ and  $T_0^m A'\subset B$ because the action $T_0$ is ergodic. Hence
\begin{equation}
\mu\left(B\bigcap T_0^m A'\right)=\mu\left(\bigcap_{l=1}^\infty\bigcup_{i=l}^\infty( A_i\bigcap T_0^m A')\right)=\mu\left(T_0^m A'\right)>0,
\end{equation}
 hence  there exists a subsequence $\{n_l'\}$ of $\{n_l\}$ such that for each $n_l'$,
 there exists a measurable set $A'_{n_l'}\subset A'$ with $\mu(A'_{n_l'})>0$ and
\begin{equation}
a_{n_l'}(T_0^mx)=k_1,\quad a_{n_l'}(T_0^mx+t)=k_2, \quad \forall x \in A'_{n_l'}.
\end{equation}
Note that
\begin{align}
\abs{a_{n_l'}(T_0^mx)-a_{n_l'}(x)}&=\abs{\sum_{i=0}^{n_l'-1}f\left(x+i\alpha+m\alpha\right)-\sum_{i=0}^{n_l'-1}f\left(x+i\alpha\right)}\\
&=\left|\sum_{i=0}^{m-1}f\left(x+i\alpha+n_l'\alpha\right)-\sum_{i=0}^{m-1}f\left(x+i\alpha\right)\right|, \nonumber
\end{align}
\begin{align}
\left|a_{n_l'}(T_0^mx+t)-a_{n_l'}(x+t)\right|=\left|\sum_{i=0}^{m-1}f\left(x+i\alpha+n_l'\alpha+t\right)-\sum_{i=0}^{m-1}f\left(x+i\alpha+t\right)\right|,
\end{align}
\begin{equation}
\lim\norm{n_l'\alpha}= 0,
\end{equation}
as well as the fact that $m$ is fixed and depends on $A$ only, we deduce that
there exists some $n_l'$ and $A'' \subset A' \subset A$ with   $\mu(A'')>0$ such that
\begin{equation}
a_{n_l'}(T_0^mx)=a_{n_l'}(x)=k_1, \quad a_{n_l'}(T_0^mx+t)=a_{n_l'}(x+t)=k_2, \quad \forall x \in A''.
\end{equation}
Hence we have
\begin{equation}
\mu\left( A\bigcap T_0^{-n_l'}A\bigcap \left\{x\mid a_{n_l'}(x)=k_1\right\}\bigcap\left\{x\mid a_{n_l'}(x+t)=k_2\right\}\right)>0.
\end{equation}
$(k_1,k_2) \in \E^2\left(a, t\right)$.
\end{proof}

We record the statement of the Denjoy-Koksma inequality \cite[Lemma 2]{O} here, which plays a fundamental role in the proof.

\begin{lem}[Denjoy-Koksma]\label{denjoy}
If $p \in \N,q \in \N$ satisfy
\[\abs{\alpha-\frac{p}{q}}<\dfrac{1}{q^{2}} \quad \text{ and } \quad  (p,q)=1,\]
 then $\abs{a_q(x)}<4$, $\forall x \in \T$, where $a_q(x)$ is defined in \eqref{an}.
\end{lem}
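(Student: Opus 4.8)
The plan is to compare the Birkhoff sum $a_q(x)=\sum_{i=0}^{q-1}f(x+i\alpha)$ with the analogous sum for the \emph{rational} rotation $x\mapsto x+p/q$, whose orbit of length $q$ is an equally spaced grid that can be counted exactly. If $q\le 3$ there is nothing to prove, since $a_q(x)$ is a sum of $q$ terms each of absolute value $1$, so $\abs{a_q(x)}\le q\le 3<4$; hence assume $q\ge 4$. Put $\beta=p/q$ and $\epsilon=\alpha-\beta$. Since $\alpha$ is irrational, $\epsilon\ne 0$, and by hypothesis $\abs{\epsilon}<q^{-2}$, so $\abs{i\epsilon}<(q-1)q^{-2}<q^{-1}$ for every $0\le i\le q-1$. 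Because $(p,q)=1$, the map $i\mapsto ip\bmod q$ permutes $\{0,\dots,q-1\}$, so $\{x+i\beta\bmod 1:0\le i\le q-1\}$ equals the grid $Y=\{x+k/q\bmod 1:0\le k\le q-1\}$ consisting of $q$ points with consecutive gap $1/q$. Setting $R=\#\bigl(Y\cap[0,\tfrac{1}{2})\bigr)$, a short count (the interval $[0,\tfrac{1}{2})$ has length $\tfrac{1}{2}=\tfrac{q/2}{q}$) gives $R\in\{\lfloor q/2\rfloor,\lceil q/2\rceil\}$, hence $\abs{2R-q}\le 1$.

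It then remains only to show $\abs{S_q(x)-R}\le 1$, because then
\[
\abs{a_q(x)}=\abs{2S_q(x)-q}=\abs{2\bigl(S_q(x)-R\bigr)+(2R-q)}\le 2+1=3<4.
\]
Write $x_i=x+i\alpha$ and $y_i=x+i\beta$, so that $x_i=y_i+i\epsilon\pmod 1$ with $\abs{x_i-y_i}<q^{-1}$; then $S_q(x)-R=\sum_{i=0}^{q-1}\bigl(\chi_{[0,\frac{1}{2})}(x_i)-\chi_{[0,\frac{1}{2})}(y_i)\bigr)$, and the $i$-th summand is nonzero only when $x_i$ and $y_i$ lie on opposite sides of the boundary $\{0,\tfrac{1}{2}\}$ of $[0,\tfrac{1}{2})$. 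I would then fix the sign of $\epsilon$, say $\epsilon>0$ (the case $\epsilon<0$ being the mirror image). Using $0<i\epsilon<q^{-1}$ together with $q\ge 4$, one checks that a summand equal to $+1$ forces $y_i\in(1-q^{-1},1)$, that a summand equal to $-1$ forces $y_i\in(\tfrac{1}{2}-q^{-1},\tfrac{1}{2})$, and that a grid point lying just to the \emph{right} of $0$ or of $\tfrac{1}{2}$ produces no straddle at all. Since each of the arcs $(1-q^{-1},1)$ and $(\tfrac{1}{2}-q^{-1},\tfrac{1}{2})$ contains at most one point of $Y$, there is at most one summand equal to $+1$ and at most one equal to $-1$, whence $S_q(x)-R\in\{-1,0,1\}$, as needed.

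The only step requiring care is this straddle bookkeeping: one must check that, for a fixed sign of $\epsilon$, a crossing of $0$ can occur only ``from the left'' and a crossing of $\tfrac{1}{2}$ likewise, so that the $+1$ and $-1$ contributions cannot reinforce one another, and one must dispose of the degenerate configurations in which $0$ or $\tfrac{1}{2}$ already belongs to $Y$ --- each handled by the \emph{strict} inequality $\abs{i\epsilon}<q^{-1}$. Everything else (the exact grid count and the arithmetic in the displayed estimate) is routine.
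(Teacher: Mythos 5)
Your argument is correct. Note, though, that the paper does not prove this lemma at all: it simply quotes it from Oren (\cite[Lemma 2]{O}), and only extracts from that proof the fact that each interval $[\tfrac{i}{q},\tfrac{i+1}{q})$ contains exactly one point $j\alpha$, $0\le j\le q-1$. What you have written is a self-contained proof along the standard Denjoy--Koksma lines, specialized to $f=2\chi_{[0,\frac12)}-1$: replace $\alpha$ by the convergent $p/q$, use $(p,q)=1$ to see that the length-$q$ orbit of the rational rotation is an exact $\tfrac1q$-grid, count the grid points in $[0,\tfrac12)$ exactly ($|2R-q|\le 1$), and then control the perturbation $i\epsilon$ with $|i\epsilon|<\tfrac1q$ so that at most one orbit point can straddle each of the two discontinuities $0$ and $\tfrac12$, in a single direction determined by the sign of $\epsilon$. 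The straddle bookkeeping is sound: for $\epsilon>0$ a $+1$ discrepancy forces $y_i\in(1-\tfrac1q,1)$ and a $-1$ discrepancy forces $y_i\in(\tfrac12-\tfrac1q,\tfrac12)$, and an open (or half-open, in the mirror case) arc of length $\tfrac1q$ meets the grid in at most one point; the boundary-degenerate cases are indeed killed by the strict inequality. Your route even yields the sharper bound $|a_q(x)|\le 3$, which is cleaner than invoking the general bounded-variation inequality $|a_q(x)|\le \operatorname{Var}(f)=4$ and then worrying about when equality can occur, and as a by-product it contains the grid-comparison fact that the paper later cites from Oren's proof. The only cosmetic caveat is that your claim $R\in\{\lfloor q/2\rfloor,\lceil q/2\rceil\}$ deserves the one-line justification (for $q$ even $[0,\tfrac12)$ is a union of $q/2$ half-open grid-length intervals, each containing exactly one grid point; for $q$ odd the leftover interval of length $\tfrac1{2q}$ contributes $0$ or $1$), but that is routine as you say.
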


It follows from the proof of the above lemma that every interval of the form $\left[\tfrac{i}{q},\tfrac{i+1}{q}\right)$ contains exactly one of the
points $j\alpha$ for $0\leq i, j \leq q-1$. In other words, the points $j\alpha$ ($0\leq  j \leq q-1$) are uniformly distributed on the unit circle.

We rely on numerous facts concerning continued fractions stated in texts such as \cite{K}. A considerable portion of our approach is borrowed from
\cite{O}. However, here we need to construct period approximating sequence  $\left\{(n_l,A_l)\right\}_{l=1}^\infty$ such that
$a_{n_l}\left(A_l\right)$ and $a_{n_l}\left(A_l+t\right)$ take on \emph{independent} values whereas predecessors of this paper only deal with cylinder flows.

We denote by $\left[a_0; a_1,a_2, \ldots,\right]$ the continued fraction of $\alpha$ and call
 the $a_i$ the partial quotients of $\alpha$. Denote by $\tfrac{p_k}{q_k}$ the $k$th partial convergent of $\alpha$
 where $k\geq 0$. It is known from
\cite{K} that
\begin{equation}
\dfrac{p_k}{q_k}=[a_0; a_1,a_2, \ldots,a_k];
\end{equation}
\begin{equation}\label{continued}
\norm{q_k\alpha}<\dfrac{1}{q_{k+1}}<\dfrac{1}{q_k};
\end{equation}
\begin{equation}\label{continued22}
\min_{q_k\leq q<q_{k+1}} \norm{q\alpha}=\norm{q_k\alpha}>\dfrac{1}{q_k+q_{k+1}}>\dfrac{1}{2q_{k+1}}.
\end{equation}
\begin{equation}\label{continued55}
q_kp_{k-1}-p_kq_{k-1}=(-1)^{k}.
\end{equation}
Set
\begin{equation}
D(\alpha)=\left\{q_k\mid \dfrac{p_k}{q_k} \text{ is a partial convergent of } \alpha \right\};
\end{equation}
\begin{equation}\label{next}
q^+=\min\{ q' \in D(\alpha)\mid q'>q\}, \quad \forall q \in D(\alpha).
\end{equation}
Adopting arguments on   \cite[Page 229-230]{K1} we are able to prove the following lemma which constitutes the
first step in the entire proof:

\begin{lem}\label{first}
\begin{equation}
\E^2\left(a,t\right)\bigcap\left\{(1,3),(1,-3),(1,1), (1,-1),(3,1),(3,-1),(3,3), (3,-3)\right\}\neq \emptyset.
\end{equation}
\end{lem}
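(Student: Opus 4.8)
The plan is to realize one of the eight listed pairs as an essential value by producing a period approximating sequence built out of convergent denominators, using Denjoy--Koksma to confine the relevant cocycle values to a fixed finite set and then a two-level pigeonhole argument.

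First I would restrict attention to convergent denominators $q_k$ that are \emph{odd}. By \eqref{continued55} consecutive denominators $q_{k-1},q_k$ are coprime, hence never both even; so infinitely many $q_k$ are odd, and along such indices $q_k\to\infty$. For such an odd $q=q_k$, \eqref{continued} gives $\norm{q_k\alpha}<1/q_k$, so $\abs{\alpha-p_k/q_k}<1/q_k^{2}$ with $(p_k,q_k)=1$, and Lemma \ref{denjoy} yields $\abs{a_q(x)}<4$ for every $x$; since $a_q(x)$ has the parity of $q$ by \eqref{an}, this forces $a_q(x)\in\{-3,-1,1,3\}$ for every $x\in\T$, and likewise $a_q(x+t)\in\{-3,-1,1,3\}$. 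Thus, for every odd convergent denominator $q$, the map $x\mapsto(a_q(x),a_q(x+t))$ takes values in the fixed $16$-element set $V=\{-3,-1,1,3\}^{2}$, a set that does not depend on $q$.

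Next I would run the pigeonhole. For odd $q=q_k$ set $E_v^{(q)}=\{x\in\T\colon (a_q(x),a_q(x+t))=v\}$ for $v\in V$; these are measurable and partition $\T$, so some $v(q)\in V$ satisfies $\mu(E_{v(q)}^{(q)})\geq \tfrac{1}{16}$. As $V$ is finite, there is a single pair $v^{*}=(k_1^{*},k_2^{*})\in V$ and an infinite sequence of odd convergent denominators $q$ along which $v(q)=v^{*}$. Putting $A=E_{v^{*}}^{(q)}$ for these $q$, the family $\{(q,A)\}$ is a period approximating sequence: $a_q\equiv k_1^{*}$ on $A$ and $a_q\equiv k_2^{*}$ on $A+t$ by construction, $\inf\mu(A)\geq\tfrac{1}{16}>0$, and $\norm{q\alpha}<1/q\to0$. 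The preceding lemma then gives $v^{*}\in\E^2(a,t)$. Since $\E^2(a,t)$ is a subgroup of $\Z^2$, also $-v^{*}\in\E^2(a,t)$; as $k_1^{*}\in\{-3,-1,1,3\}$, exactly one of $k_1^{*},-k_1^{*}$ lies in $\{1,3\}$, so one of $v^{*},-v^{*}$ lies in $\{1,3\}\times\{-3,-1,1,3\}$, which is precisely the eight-element set in the statement.

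The step I expect to be the crux is the confinement in the second paragraph: it is exactly Denjoy--Koksma (Lemma \ref{denjoy}) that makes the cocycle values over a convergent denominator lie in one finite set \emph{independent of} $k$, which is what allows the pigeonhole to produce sets $A$ of measure bounded below uniformly; without this, condition (3) of a period approximating sequence ($\inf_l\mu(A_l)>0$) would be lost. The only arithmetic subtlety is the passage to odd $q_k$, needed so that the confining set $\{-3,-1,1,3\}$ avoids $0$ (recall $a_q$ has the parity of $q$), and this is where coprimality of consecutive convergents, \eqref{continued55}, enters. Everything else is bookkeeping, and nothing here uses that $\alpha$ is badly approximable, consistent with this lemma serving as the common first step for both Theorem \ref{theorem1} and Theorem \ref{theorem2}.
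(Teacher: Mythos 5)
Your proposal is correct and follows essentially the same route as the paper: infinitely many odd convergent denominators via \eqref{continued55}, Denjoy--Koksma plus parity to confine $(a_q(x),a_q(x+t))$ to $\{\pm1,\pm3\}^2$, a pigeonhole to extract a period approximating sequence with a fixed value pair, and the group property of $\E^2(a,t)$ to replace that pair by its negative if needed. The paper states the pigeonhole step tersely ("consequently there exists a period approximating sequence"); your write-up simply makes the $1/16$ measure bound and the subsequence extraction explicit.
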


\begin{proof}

From \eqref{continued55} we derive that there are infinitely many odd $q \in D(\alpha)$.
For such $q \in D(\alpha)$, the Denjoy-Koksma inequality applies. In addition, from \eqref{an} we see that $a_q(x)$
can only be odd, that is,  $a_q(x)$ can only be $\pm3$ or $\pm1$.

Consequently there exists a period approximating sequence  $\left\{(q_l,A_l)\right\}_{l=1}^\infty$
such that $q_l \in D(\alpha)$,
\begin{enumerate}
\item $A_l \subset \T$;
\item $a_{q_l}$ is constant on  both $A_l$ and $A_l+t$, $a_{q_l}(A_l)=k_1, a_{q_l}(A_l+t)=k_2 \quad \forall n_l$;
\item $\inf_l \mu(A_l)>0$;
\item $\|q_l\alpha\|\rightarrow 0$
\end{enumerate}
and $(k_1,k_2) \in \left\{(1,3),(1,-3),(1,1), (1,-1),(3,1),(3,-1),(3,3), (3,-3)\right\}$\\
$ \bigcup
\left\{-(1,3),-(1,-3),-(1,1),- (1,-1),-(3,1),-(3,-1),-(3,3), -(3,-3)\right\}$.
The proof is complete by noting that $\E^2\left(a,t\right)$ is a group under addition.
\end{proof}

A major difficulty to prove Theorem \ref{theorem1} is therefore to show that
$\E^2\left(a,t\right)$ is not isomorphic to $\Z$.
We aim to show that $\E^2\left(a,t\right)$ is  $G$  for almost all $t$.
This is done by using period approximating sequences.
We derive from properties of continued fractions the following lemma:

\begin{lem}\label{continued33}
For any nonzero $q \in D(\alpha)$, we have
\begin{equation}
\min \left\{\norm{\tfrac{1}{2}-j\alpha} \mid |j|<q\right\}\geq \dfrac{1}{24q}.
\end{equation}
\end{lem}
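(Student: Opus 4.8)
The plan is to convert the one–sided estimate on $\norm{\tfrac12-j\alpha}$ into a lower bound for $\norm{m\alpha}$ with $m$ running over a bounded set of \emph{even} integers, and then to extract that bound from the best–approximation inequalities \eqref{continued}--\eqref{continued22} for the convergents of $\alpha$. First I would record the elementary identity $\norm{\tfrac12+x}=\tfrac12-\norm{x}$, valid for all $x\in\R$; taking $x=-j\alpha$ turns the claim into $\norm{j\alpha}\le\tfrac12-\tfrac1{24q}$ for every $\abs j<q$. If this failed, say $\norm{j_0\alpha}>\tfrac12-\tfrac1{24q}$ with $0<\abs{j_0}<q$, then since $\tfrac1{24q}\le\tfrac14$ the doubling relation $\norm{2\theta}=2\bigl(\tfrac12-\norm\theta\bigr)$ (valid when $\norm\theta\ge\tfrac14$), applied to $\theta=j_0\alpha$, would give $\norm{2j_0\alpha}<\tfrac1{12q}$. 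So it suffices to prove: for $q=q_k\in D(\alpha)$, every even integer $m$ with $0<\abs m<2q$ satisfies $\norm{m\alpha}\ge\tfrac1{12q}$.

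For the lower bound I would split on the size of $m$. If $0<\abs m<q_k$, then applying \eqref{continued22} at the appropriate level $j\le k-1$ with $q_j\le\abs m<q_{j+1}$, and using that $\norm{q_j\alpha}$ is decreasing in $j$, gives $\norm{m\alpha}\ge\norm{q_{k-1}\alpha}>\tfrac1{q_{k-1}+q_k}>\tfrac1{2q_k}$, which is $>\tfrac1{12q}$. If $q_k\le\abs m<2q_k$, replace $m$ by $-m$ if necessary and write $\abs m=q_k+m'$ with $0\le m'<q_k$; from $q_k\alpha\equiv\pm\norm{q_k\alpha}$ and $m'\alpha\equiv\pm\norm{m'\alpha}\pmod1$ one gets $\norm{m\alpha}=\bigl|\,\norm{m'\alpha}\pm\norm{q_k\alpha}\,\bigr|$. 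When the two signs agree this is $\norm{m'\alpha}+\norm{q_k\alpha}\ge\norm{q_{k-1}\alpha}>\tfrac1{2q_k}$ (for $m'\ge1$), and we are done.

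The main obstacle is the complementary case: the signs disagree \emph{and} $m'$ is chosen so that $\norm{m'\alpha}$ is close to its minimum $\norm{q_{k-1}\alpha}$, together with the borderline subcase $m'=0$, i.e.\ $m=\pm q_k$, which can occur only when $q_k$ is even. Here $\norm{m\alpha}$ is no longer bounded below by a simple multiple of $1/q_k$ in an obvious way, and one must use the exact three–term relation $\norm{q_{k-1}\alpha}=a_{k+1}\norm{q_k\alpha}+\norm{q_{k+1}\alpha}$ between consecutive convergents: split according to whether $a_{k+1}=1$ (so $q_{k+1}=q_k+q_{k-1}<2q_k$ and one must look one step further, to $q_{k+2}$ and the Denjoy--Koksma-type bounds \eqref{continued}--\eqref{continued22} applied there) or $a_{k+1}\ge2$ (so $q_k+q_{k-1}<q_{k+1}$, which already forces $\norm{(q_k+q_{k-1})\alpha}=\norm{q_{k-1}\alpha}-\norm{q_k\alpha}\ge\norm{q_k\alpha}>\tfrac1{q_k+q_{k+1}}$), and exploit that $\norm{m'\alpha}>\norm{q_{k-1}\alpha}$ strictly unless $m'=q_{k-1}$. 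This is exactly the step where the hypothesis $q\in D(\alpha)$, rather than an arbitrary integer, enters, and where the generous constant $24$ is spent; everything outside this case is routine continued–fraction bookkeeping in the style of \cite[pp.\ 229--230]{K1}.
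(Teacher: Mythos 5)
Your opening reduction is sound and is in substance the same first step as the paper's: the identity $\norm{\tfrac12-j\alpha}=\tfrac12-\norm{j\alpha}$ plus the doubling relation is just a contrapositive rephrasing of the paper's inequality $\norm{\tfrac12-j\alpha}\ge\tfrac12\norm{2j\alpha}$, and your ``routine'' half (the range $0<|m|<q_k$, and the signs-agree subcase for $q_k\le|m|<2q_k$) is correct continued-fraction bookkeeping. The problem is that the proof stops exactly at what you yourself call the main obstacle: for the subcases $m'=0$ and signs-disagree-with-$m'$-near-$q_{k-1}$ you only sketch a plan (three-term recursion, split on $a_{k+1}$, ``look one step further to $q_{k+2}$''), and that plan cannot be carried out. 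Your reduced claim, that $\norm{m\alpha}\ge\tfrac1{12q}$ for every even $m$ with $0<|m|<2q$, is false: if $q=q_k$ is even, the admissible $m=q_k$ has $\norm{q_k\alpha}<1/q_{k+1}$, which is much smaller than $1/(12q_k)$ once $a_{k+1}$ is large; and if $a_{k+1}=1$ with $q_{k+1}=q_k+q_{k-1}$ even, then $m=q_{k+1}$ has $\norm{m\alpha}<1/q_{k+2}$, and $q_{k+2}/q_k$ is unbounded, so no appeal to the next convergent can produce a bound of the form $c/q_k$.

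Nor is the slack in the reduction (discarding the requirement that $2j\alpha$ be near an \emph{odd} integer) what kills you: coprimality forces $p_k$ odd whenever $q_k$ is even, so for $j=q_k/2$ one has $j\alpha\equiv\tfrac12+\tfrac12(q_k\alpha-p_k)\pmod 1$ and hence $\norm{\tfrac12-j\alpha}=\tfrac12\norm{q_k\alpha}$. Thus for $\alpha=[0;2,100,\dots]$, $q=2\in D(\alpha)$, $j=1$, one gets $\norm{\tfrac12-\alpha}\approx 1/402<1/48$, and similarly $\alpha=[0;3,1,N,\dots]$ with $N$ large, $q=3$, $j=2$ (so the failure occurs even for odd $q$). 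In other words, the inequality with the constant $1/(24q)$ simply does not hold in the configurations $|2j|=q$ or $|2j|=q^+$, so the gap in your argument is not closable as stated. For comparison, the paper's proof runs the same doubling trick and then a five-case analysis on $|2j|$ versus $q,q^+,q^{++}$; its Cases 1, 3 and 5 invoke \eqref{continued22} for the integers $|2j|-q$, $|2j|-q^+$, $|4j|-q^+$, which vanish in exactly these configurations, so the crux you isolated is also the point the printed argument does not cover; repairing the lemma requires either excluding these parity configurations or adding a hypothesis (e.g.\ a bound on the partial quotients) with a correspondingly adjusted constant.
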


\begin{proof}
We always have
\begin{equation}
 \left\|\frac{1}{2}-j\alpha\right\|\geq \frac{\|2(\frac{1}{2}-j\alpha)\|}{2}= \frac{\|2j\alpha\|}{2}.
\end{equation}
We consider five cases separately under the assumption that
 $0<|j|<q$.

\textbf{Case 1:} $q^+\geq 3q$, then since $||2j|-q|<q$ from $0<|j|<q$, we have $\|(|2j|-q)\alpha\|>\frac{1}{2q}$ from \eqref{continued22} and
\begin{equation}
\|2j\alpha\|=\|(|2j|-q)\alpha+q\alpha\|\geq\|(|2j|-q)\alpha\|-\|q\alpha\|>\frac{1}{2q}-\frac{1}{q^{+}}>\frac{1}{2q}-\frac{1}{3q}=\frac{1}{6q}.
\end{equation}
Here we also used the inequality $\|q\alpha\|<\dfrac{1}{q^{+}}$ from \eqref{continued}.

\textbf{Case 2:} If $q^+< 3q$ and $q^{++}< 3q$, then since $|2j|<2q\leq q^{++}$, we have from \eqref{continued22}
\begin{equation}
\|2j\alpha\|\geq\|q^+\alpha\|\geq\frac{1}{2q^{++}}>\frac{1}{6q}.
\end{equation}

\textbf{Case 3:} If $q^+< 3q$, $q^{++}\geq 3q$ and $|q^+-|2j||< q$, then we have $\|(|2j|-q^+)\alpha\|>\frac{1}{2q}$ from \eqref{continued22} and
\begin{equation}
\|2j\alpha\|=\|(|2j|-q^+)\alpha+q^+\alpha\|\geq\|(|2j|-q^+)\alpha\|-\|q^+\alpha\|>\frac{1}{2q}-\frac{1}{q^{++}}>\frac{1}{2q}-\frac{1}{3q}=\frac{1}{6q}.
\end{equation}

\textbf{Case 4:} If $q^+< 3q$, $q^{++}\geq 3q$,  $|q^+-|2j||\geq q$ and  $|2j|\leq q$, then from \eqref{continued22} we get
\begin{equation}
\|2j\alpha\|\geq\|q\alpha\|>\frac{1}{2q^{+}}\geq \frac{1}{6q}.
\end{equation}

\textbf{Case 5:} If $q^+< 3q$, $q^{++}\geq 3q$,   $|q^+-|2j||\geq q$ and $|2j|> q$, then
\begin{equation}
q^+-|4j|<3q-2q=q, \quad 2q-q^+>2q-3q=-q;
\end{equation}
\begin{equation}
|2j|\leq q^+-q \rightarrow q^+-|4j|\geq q^+-2(q^+-q)=2q-q^+>-q;
\end{equation}
hence $|q^+-|4j||<q$ and from \eqref{continued22}
\begin{equation}
\|4j\alpha\|=\|(q^+-|4j|)\alpha-q^+\alpha\|\geq\|(q^+-|4j|)\alpha\|-\|q^+\alpha\|>\frac{1}{2q}-\frac{1}{q^{++}}\geq \frac{1}{6q};
\end{equation}
and $\|2j\alpha\|\geq\dfrac{\|4j\alpha\|}{2}$. The inequality is established.
\end{proof}

\section{Proof of main theorems}

Following \cite{O} we set for each $q \in D(\alpha)$
\begin{align}\label{smallest}
\epsilon(q) = & q\cdot\min\left\{ \left\|-t-j\alpha\right\|\mid |j|<q \right\};\\
\theta(q) = & q\cdot\min\left\{ \left\|\tfrac{1}{2}-t-j\alpha\right\|\mid |j|<q \right\}. \nonumber
\end{align}
We immediately derive that  $\epsilon(q)<1$ and $\theta(q)<1$ from the  proof of the Denjoy-Koksma inequality.

\begin{prop}\label{bouded}
If \begin{equation}\label{continued44}
\limsup_{\substack{q \in  D(\alpha)   \\ q \rightarrow \infty}} \min\left\{\epsilon(q),\theta(q)\right\}>0,
\end{equation}
then
$\E^2\left(a,t\right)=\left\{(k_1,k_2) \in \Z^2\mid k_1\equiv k_2 \mod 2 \right\}=G$.
\end{prop}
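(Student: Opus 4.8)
The plan is to show that, under hypothesis \eqref{continued44}, we can build period approximating sequences realizing enough essential values to generate all of $G$. By Lemma \ref{first} we already know that $\E^2(a,t)$ contains at least one of the eight "odd" vectors $(\pm1,\pm1),(\pm1,\pm3),(\pm3,\pm1),(\pm3,\pm3)$. Since $\E^2(a,t)$ is a subgroup of $G$ and $G$ is generated by any such odd vector together with $(2,0)$ and $(0,2)$ — indeed $(2,0)=(1,1)+(1,-1)$ and $(0,2)=(1,1)-(1,-1)$, and similarly for the others — it suffices to produce $(2,0)$ and $(0,2)$ as essential values. So the real target is: \emph{construct period approximating sequences $\{(n_l,A_l)\}$ with $a_{n_l}(A_l)=2$, $a_{n_l}(A_l+t)=0$, and another with the two coordinates swapped.}

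To do this I would take $q\in D(\alpha)$ with $q$ odd (infinitely many exist, as noted in the proof of Lemma \ref{first}) and consider $n_l$ of the form $q$ or a small integer combination near $q$, so that $\|n_l\alpha\|\to 0$. The quantities $\epsilon(q)$ and $\theta(q)$ in \eqref{smallest} measure how far the discontinuity points $-t-j\alpha$ (resp. $\tfrac12-t-j\alpha$), $|j|<q$, stay away from $0$; the Denjoy–Koksma-type bound tells us the points $\{j\alpha : 0\le j\le q-1\}$ are roughly $\tfrac1q$-uniformly spread. The hypothesis $\limsup \min\{\epsilon(q),\theta(q)\}>0$ gives, along a subsequence $q_l$, a definite gap $\delta>0$: the translated discontinuity set for $a_{q_l}(\cdot+t)$ avoids a fixed-length arc $I$ around a suitable point. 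On $I$ the function $a_{q_l}(\cdot+t)$ is therefore constant (equal to some odd value, bounded by Denjoy–Koksma), while on $I$ itself $a_{q_l}(\cdot)$ still takes a controlled range of odd values; by the uniform-distribution/partition argument from \cite{O}, inside $I$ one can select a subinterval $A_l$ of length bounded below on which $a_{q_l}$ takes any prescribed value in $\{-3,-1,1,3\}$ consistent with the count. Choosing the prescribed values so that $a_{q_l}(A_l)-a_{q_l}(A_l+t)$ equals $2$ (possible since both coordinates range over $\{-3,-1,1,3\}$ and the difference $2$ is attainable, e.g. $3-1$ or $1-(-1)$) and using $\inf_l\mu(A_l)>0$, we get a period approximating sequence, hence $(2,0)\in\E^2(a,t)$ by the Lemma following the definition of period approximating sequences. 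Using the symmetry between the roles of $x$ and $x+t$ — that is, running the same construction with $\epsilon,\theta$ controlling discontinuities of $a_{q_l}(\cdot)$ while we let $a_{q_l}(\cdot+t)$ vary — yields $(0,2)\in\E^2(a,t)$.

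Combining: $\E^2(a,t)$ contains $(2,0)$, $(0,2)$, and one odd vector, hence equals $G$; and the reverse inclusion $\E^2(a,t)\subseteq G$ is property (1) recorded after the definition of essential values. The step I expect to be the main obstacle is making precise the selection of $A_l$ inside the good arc $I$ so that \emph{both} $a_{q_l}(\cdot)$ and $a_{q_l}(\cdot+t)$ are simultaneously constant on $A_l$ with the prescribed difference: one must track the finitely many discontinuities of each of the two cocycle coordinates inside $I$, use \eqref{bound} and \eqref{continued22} together with Lemma \ref{continued33} to keep them separated by a definite amount, and then invoke the uniform-distribution of $\{j\alpha\}$ to guarantee a subinterval of length $\gtrsim \delta/q$ clear of all of them — which, after rescaling, has $\mu(A_l)$ bounded below independently of $l$. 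The bookkeeping of which odd values are forced versus free on such a subinterval, and checking that the difference $2$ (equivalently $(0,2)$, $(2,0)$) is always achievable, is where the argument of \cite{O} has to be adapted to the two-coordinate setting.
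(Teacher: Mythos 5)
The skeleton of your plan (reduce to producing $(2,0)$ and $(0,2)$, exploit the separation of discontinuities guaranteed along a subsequence by \eqref{continued44}, and feed period approximating sequences into the lemma on essential values) is the same as the paper's, but the execution has several genuine gaps, and they sit exactly where the real difficulty is. First, your key step ``inside $I$ one can select a subinterval $A_l$ on which $a_{q_l}$ takes any prescribed value in $\{-3,-1,1,3\}$'' is unjustified and in general false: on an interval free of discontinuities of $a_{q_l}(\cdot+t)$ (of length at most $\tfrac{2}{q_l}$), the function $a_{q_l}(\cdot)$ changes only by crossing its own discontinuities, each crossing altering exactly one coordinate of the pair $\left(a_{q_l}(x),a_{q_l}(x+t)\right)$ by exactly $\pm 2$, and you have no control over which values occur. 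The only usable fact is that \emph{adjacent} intervals of constancy carry pairs differing by $(\pm2,0)$ or $(0,\pm2)$; assuming you can prescribe the values assumes away the hard part. Second, even granting such a sequence with $a_{q_l}(A_l)-a_{q_l}(A_l+t)=2$, the lemma gives only that the pair itself, say $(3,1)$, is an essential value; it does not give $(2,0)\in\E^2\left(a,t\right)$. To get $(2,0)$ you need \emph{two} essential values differing by $(2,0)$ and the group structure. Third, your measure bound fails: a single subinterval of length $\gtrsim \delta/q_l$ has measure tending to $0$, and ``after rescaling'' has no meaning in the definition of a period approximating sequence, which requires $\inf_l\mu(A_l)>0$ in Lebesgue measure. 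The paper instead pigeonholes the at most $16$ possible value pairs over all $4q_n$ intervals of constancy to get a \emph{union} $A_n$ with $\mu(A_n)\ge\tfrac{1}{16}$, and uses the length-ratio bound $\tfrac12\min\{\tfrac{1}{24},\epsilon(q_n),\theta(q_n)\}$ (from \eqref{continued22}, Lemma \ref{continued33} and \eqref{continued44}) to show the union of right-neighboring intervals also has measure bounded below.

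Finally, even with the neighboring-interval argument done correctly, you only obtain \emph{one} of $(\pm2,0)$ or $(0,\pm2)$, with no control over which coordinate jumps; your appeal to ``symmetry between the roles of $x$ and $x+t$'' does not repair this, because rerunning the construction with the roles swapped again produces a jump in some uncontrolled coordinate, which for all you know is always the same one. The paper needs an additional device precisely here: starting from the sequence $\{(q_n,A_n)\}$ realizing, say, $(1,3)$, it builds sets $B_n'$ with $B_n'+t\subset A_n'$, which pins the \emph{second} coordinate $a_{q_n}(B_n'+t)$ to a known constant while the first coordinate is some odd $k$, giving $(k,1)\in\E^2\left(a,t\right)$; combining this with group arithmetic in a short case analysis (according to whether the neighbor pair is $(3,3)$, $(-1,3)$, or $(1,1)$, etc.) is what yields both $(2,0)$ and $(0,2)$, hence $\E^2\left(a,t\right)=G$. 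Your proposal is missing this mechanism, so as written it does not close the argument.
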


\begin{proof}

Let $\{q_n\}_{n=1}^\infty \subset D(\alpha)$ be such that $\min\left\{\epsilon(q_n),\theta(q_n)\right\}>\delta>0$, $\forall n$.

Recall $a_{q_n}(x)$ as set in \eqref{an} is locally constant except for points of discontinuities of $+2$ at $0,-\alpha, -2\alpha, \dotsc, -(q_n-1)\alpha$ and
points of discontinuities of $-2$ at
$\tfrac{1}{2}, \tfrac{1}{2}-\alpha, \tfrac{1}{2}-2\alpha, \dotsc, \tfrac{1}{2}-(q_n-1)\alpha$.
 $a_{q_n}(x+t)$ is locally constant except for points of discontinuities of $+2$ at $-t, -t-\alpha, -t-2\alpha, \dotsc, -t-(q_n-1)\alpha$ and
points of discontinuities of $-2$ at
$\tfrac{1}{2}-t, \tfrac{1}{2}-t-\alpha,  \dotsc, \tfrac{1}{2}-t-(q_n-1)\alpha$.

For fixed $n$, let $I_1,I_2,\ldots,I_{4q_n}$ denote the intervals of constancy of both $a_{q_n}(x)$
and $a_{q_n}(x+t)$ in cyclic order. Since $a_{q_n}(\cdot)$ takes on
at most four values by Lemma \ref{denjoy}, there exists a union of intervals, $A_n$, such that $a_{q_n}(x)$ and
$a_{q_n}(x+t)$ are constant on $A_n$ and $\mu(A_n)\geq\tfrac{1}{16}$.
Let  $A_n'$ be the union of  intervals proximal on the right to those of  $A_n$.
Note that the distance between any discontinuities of $a_{q_n}(x)$ and $a_{q_n}(x+t)$ is given by
$\|(i-j)\alpha\|$ or $\|\tfrac{1}{2}+(i-j)\alpha\|$ or $\|-t+(i-j)\alpha\|$ or $\|\tfrac{1}{2}-t+(i-j)\alpha\|$
for $0\leq i,j\leq q_n-1$. From \eqref{continued22}, Lemma \ref{continued33} and \eqref{continued44},
we have that $\min\left\{\tfrac{1}{24q_n},\tfrac{\epsilon(q_n)}{q_n}, \tfrac{\theta(q_n)}{q_n}\right\}$ is a lower bound
for the lengths $|I_i|$, $i=1,2,\dotsc, 4q_n$. Since every interval of length $\tfrac{2}{q_n}$ must contain a +2
discontinuity by discussion following Lemma \ref{denjoy},  we have   $|I_i|<\tfrac{2}{q_n}$. Therefore we have
\begin{equation}
\frac{|I_i|}{|I_j|}>\frac{1}{2}\min\left\{\frac{1}{24},\epsilon(q_n), \theta(q_n)\right\}, \quad 1\leq i, j \leq 4q_n.
\end{equation}
By setting $\epsilon=\min\left\{\frac{1}{24},\delta\right\}$, we thus have
$\mu(A_n')\geq\tfrac{1}{2}\epsilon\mu(A_n)\geq\tfrac{1}{32}\epsilon, \; \forall n$.
$\left(a,t\right)(q_n,x)=\left(a_{q_n}(x),a_{q_n}(x+t)\right)$ can take on $A_n'$ only the values $\left(a_{q_n}(A_n)\pm2,a_{q_n}(A_n+t)\right)$
or $\left(a_{q_n}(A_n),a_{q_n}(A_n+t)\pm2 \right)$ since each interval of $A_n'$ is proximal on the right to one of  $A_n$.
We can find $A_n''\subset A_n'$ such that
$a_{q_n}(x)$ and $a_{q_n}(x+t)$ are both constant on $A_n''$,  $\mu(A_n'')\geq\frac{1}{128}\epsilon$ and
\begin{equation}
\left(a_{q_n}(A_n''),a_{q_n}(A_n''+t) \right)=\left(a_{q_n}(A_n)\pm2,a_{q_n}(A_n+t)\right)\text{  or }
\left(a_{q_n}(A_n),a_{q_n}(A_n+t)\pm2 \right).
\end{equation}
We assume that  $a_{q_n}(A_n)=1$ and $a_{q_n}(A_n+t)=3$, that is $(1,3)$  lies in $\E^2\left(a,t\right)$. We
prove  both $(2,0)$ and $(0,2)$ lie in $\E^2\left(a,t\right)$.
Other possibilities can be treated
analogously.

\textbf{Case 1:}

Suppose we have  $(3,3)$ and  $(1,3)$ both lie in $\E^2\left(a,t\right)$ as a result of the above arguments.
 $(\pm2,0)$ lies in $\E^2\left(a,t\right)$ because  $\E^2\left(a,t\right)$ is a subgroup of $\Z^2$.

Moreover,  there exists a period approximating sequence  $\left\{(q_n,A_n)\right\}_{n=1}^\infty$ which defines
$(1,3) \in \E^2\left(a,t\right)$. Namely we have
\begin{enumerate}
\item $A_n \subset \T$;
\item $a_{q_n}$ is constant on  both $A_n$ and $A_n+t$, $a_{q_n}(A_n)=1, a_{q_n}(A_n+t)=3, \quad \forall n$;
\item $\inf_n \mu(A_n)>0$;
\item $\|q_n\alpha\|\rightarrow 0$.
\end{enumerate}
Therefore there exists a period approximating sequence $\left\{(q_n',B_n')\right\}_{n=1}^\infty$
which defines
$(k,1) \in \E^2\left(a,t\right)$ for some $k \in \{\pm1, \pm3\}$. Namely we have
\begin{enumerate}
\item  $\{q_n'\}$ is a subsequence of $\{q_n\}$, $B_n'+t  \subset A_n'$,  $\mu(B_n')\geq \tfrac{1}{4}\mu(A_n')$;
\item $a_{q_n'}$ is constant on  both $B_n'$ and $B_n'+t$, $a_{q_n'}(B_n')=k$, \\ $a_{q_n'}(B_n'+t)=a_{q_n'}(A_n')=1,\quad \forall n'$;
\item $\inf_{n'} \mu(B_n')>0$;
\item $\|q_n'\alpha\|\rightarrow 0$.
\end{enumerate}
\begin{equation}
(3,3) \in \E^2\left(a,t\right) \text{ and } (2,0) \in \E^2\left(a,t\right)\rightarrow (k,3) \in \E^2\left(a,t\right);
\end{equation}
\begin{equation}
(k,1) \in \E^2\left(a,t\right) \text{ and } (k,3) \in \E^2\left(a,t\right)\rightarrow (0,2) \in \E^2\left(a,t\right).
\end{equation}
Consequently both $(2,0)$ and  $(0,2)$ lie in $\E^2\left(a,t\right)$.

\textbf{Case 2:}

Suppose we have  $(-1,3)$ and  $(1,3)$ both lie in $\E^2\left(a,t\right)$.
$(\pm2,0)$ lies in $\E^2\left(a,t\right)$ because  $\E^2\left(a,t\right)$ is a subgroup of $\Z^2$.

Moreover,  there exists a period approximating sequence  $\left\{(q_n,A_n)\right\}_{n=1}^\infty$ which defines
$(1,3) \in \E^2\left(a,t\right)$. Namely we have
\begin{enumerate}
\item $A_n \subset \T$;
\item $a_{q_n}$ is constant on  both $A_n$ and $A_n+t$, $a_{q_n}(A_n)=1, a_{q_n}(A_n+t)=3, \quad \forall n$;
\item $\inf_n \mu(A_n)>0$;
\item $\|q_n\alpha\|\rightarrow 0$.
\end{enumerate}
Therefore there exists a period approximating sequence $\left\{(q_n',B_n')\right\}_{n=1}^\infty$ which defines
$(k,1) \in \E^2\left(a,t\right)$ for some $k \in \{\pm1, \pm3\}$. Namely we have
\begin{enumerate}
\item $\{q_n'\}$ is a subsequence of $\{q_n\}$, $B_n'+t  \subset A_n'$, $\mu(B_n')\geq \tfrac{1}{4}\mu(A_n')$;
\item $a_{q_n'}$ is constant on  both $B_n'$ and $B_n'+t$, $a_{q_n'}(B_n')=k$, \\ $a_{q_n'}(B_n'+t)=a_{q_n'}(A_n')=1,\quad \forall n'$;
\item $\inf_{n'} \mu(B_n')>0$;
\item $\|q_n'\alpha\|\rightarrow 0$.
\end{enumerate}
\begin{equation}
(1,3) \in \E^2\left(a,t\right) \text{ and } (2,0) \in \E^2\left(a,t\right)\rightarrow (k,3) \in \E^2\left(a,t\right);
\end{equation}
\begin{equation}
(k,1) \in \E^2\left(a,t\right) \text{ and } (k,3) \in \E^2\left(a,t\right)\rightarrow (0,2) \in \E^2\left(a,t\right).
\end{equation}
Consequently both $(2,0)$ and  $(0,2)$ lie in $\E^2\left(a,t\right)$.

\textbf{Case 3:}

Suppose we have  $(1,1)$ and  $(1,3)$ both lie in $\E^2\left(a,t\right)$. $(0,2)$ lies in $\E^2\left(a,t\right)$.
$(2,2)$ also lies in $\E^2\left(a,t\right)$ and therefore $(2,0)$ lies in $\E^2\left(a,t\right)$.

In all cases we have shown both $(2,0)$ and  $(0,2)$ lie in $\E^2\left(a,t\right)$.
Along with the assumption that $(1,3)$  lies in $\E^2\left(a,t\right)$,
we derive that
$\E^2\left(a,t\right)=G$ as desired.
\end{proof}

\begin{remark}
For arbitrary $\alpha$ the set of $t$ satisfying \eqref{continued44} has full Lebesgue measure.
Therefore for almost all $t \in \T$, we have $\E^2\left(a,t\right)=G$ and
Theorem \ref{theorem1} is established.
\end{remark}

Next we prove Theorem \ref{theorem2}. Note that $\alpha$ is badly approximable if and only if its partial quotients
are bounded.

\begin{prop}\label{bouded2}
If $\alpha$ is badly approximable and
\begin{equation}\label{continued6}
\lim_{\substack{q \in  D(\alpha)   \\ q \rightarrow \infty}} \min\left\{\epsilon(q),\theta(q)\right\}=0,
\end{equation}
then
$t \in \Z\alpha$ or $t \in \Z\alpha+\tfrac{1}{2}$.
\end{prop}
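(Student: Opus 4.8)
The plan is a direct argument. Write $\eta_k \df \min\{\epsilon(q_k),\theta(q_k)\}$; we assume $\eta_k\to 0$ as $k\to\infty$ and must show $t\in\Z\alpha$ or $t\in\Z\alpha+\tfrac12$. Since $\alpha$ is badly approximable, fix an integer $M\ge 2$ with all partial quotients at most $M-1$, so that $q_{i+\ell}\le M^\ell q_i$ for all $i,\ell\ge 0$ and $q_{i+2}\ge q_{i+1}+q_i>2q_i$ for $i\ge 1$. Note that $\|-t-j\alpha\|=\|t+j\alpha\|$, so $\epsilon(q_k)/q_k=\min\{\|t+j\alpha\|\mid |j|<q_k\}$. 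The first step I would take is a separation estimate: for each $k$, if $j_k$ (with $|j_k|<q_k$) realizes $\epsilon(q_k)$, i.e.\ $\|t+j_k\alpha\|=\epsilon(q_k)/q_k$, then for every $|j|<q_k$ we have $|j_k-j|<2q_k<q_{k+2}$, so Lemma \ref{continued33} applied at $q_{k+2}\in D(\alpha)$ gives $\|\tfrac12+(j_k-j)\alpha\|\ge \tfrac1{24q_{k+2}}$ and hence
\[
\Big\|\tfrac12-t-j\alpha\Big\|\ \ge\ \Big\|\tfrac12+(j_k-j)\alpha\Big\|-\|t+j_k\alpha\|\ \ge\ \frac1{24q_{k+2}}-\frac{\epsilon(q_k)}{q_k}.
\]
Taking the minimum over $|j|<q_k$ yields $\theta(q_k)\ge \tfrac1{24M^2}-\epsilon(q_k)$, and symmetrically $\epsilon(q_k)\ge\tfrac1{24M^2}-\theta(q_k)$; in particular $\epsilon(q_k)$ and $\theta(q_k)$ are never both below $\tfrac1{48M^2}$.

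Next I would use $\eta_k\to 0$ to pin down which of the two quantities is the small one. For $k$ large, $\eta_k<\tfrac1{48M^2}$, so by the separation estimate exactly one of $\epsilon(q_k),\theta(q_k)$ is below $\tfrac1{48M^2}$, and it equals $\eta_k$; call $q_k$ of type $A$ or type $B$ accordingly. I claim the type is eventually constant. If $q_k$ has type $A$, with witness $j_k$, then $|j_k|<q_k<q_{k+1}$ gives $\epsilon(q_{k+1})\le q_{k+1}\|t+j_k\alpha\|\le M\eta_k$; were $q_{k+1}$ of type $B$ we would have $\theta(q_{k+1})<\tfrac1{48M^2}$, hence $\epsilon(q_{k+1})>\tfrac1{48M^2}$ by the separation estimate, forcing $M\eta_k>\tfrac1{48M^2}$, which fails once $\eta_k$ is small enough. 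The case of type $B$ is handled the same way with $\tfrac12-t-j\alpha$ in place of $t+j\alpha$. Hence $q_k$ has, say, type $A$ for all $k\ge k_1$ (the type-$B$ situation is entirely parallel and will produce $t\in\Z\alpha+\tfrac12$).

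Finally I would stabilize the witnesses. For $k\ge k_1$ choose $j_k$ with $|j_k|<q_k$ and $\|t+j_k\alpha\|=\eta_k/q_k$. Then
\[
\|(j_k-j_{k+1})\alpha\|\ \le\ \|t+j_k\alpha\|+\|t+j_{k+1}\alpha\|\ \le\ \frac{\eta_k+\eta_{k+1}}{q_k},
\]
whereas if $j_k\ne j_{k+1}$ then $0<|j_k-j_{k+1}|<(1+M)q_k$, and writing $q_m\le|j_k-j_{k+1}|<q_{m+1}$ (so $q_{m+1}\le Mq_m\le M|j_k-j_{k+1}|$) inequality \eqref{continued22} gives $\|(j_k-j_{k+1})\alpha\|\ge\|q_m\alpha\|>\tfrac1{2q_{m+1}}>\tfrac1{2M(1+M)q_k}$. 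Since $\eta_k\to 0$, these two bounds are incompatible for $k$ large, so $j_k=j_{k+1}$ eventually; thus $j_k$ stabilizes to some fixed $j^\ast$, whence $\|t+j^\ast\alpha\|$, which equals $\eta_k/q_k$ for all large $k$, is both independent of $k$ and tends to $0$, hence is $0$, and $t\equiv-j^\ast\alpha\pmod 1\in\Z\alpha$.

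I expect the only genuinely new point — the rest being a two-coordinate rerun of Oren's analysis for cylinder flows — to be the second step, ruling out that the $\epsilon$–half and the $\theta$–half alternate infinitely often. This is where it is essential that the hypothesis is a limit rather than merely a $\liminf$, and where the bounded-partial-quotient bound $q_{k+1}\le Mq_k$ together with the separation estimate of Lemma \ref{continued33} supplies the quantitative gap that makes the dichotomy rigid.
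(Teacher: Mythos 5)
Your proposal is correct and follows essentially the same route as the paper's proof: both use witnesses realizing $\epsilon(q)$ and $\theta(q)$, invoke Lemma \ref{continued33} together with the triangle inequality and the bounded-partial-quotient property to rule out that the $\epsilon$-branch and the $\theta$-branch are simultaneously (or alternately) small, and then use \eqref{continued22} to force the witness index to stabilize, giving $t\in\Z\alpha$ or $t\in\Z\alpha+\tfrac12$. The only difference is cosmetic: you separate scales as a same-level gap estimate plus a one-step propagation with explicit constants depending on $M$, while the paper compares the witnesses at adjacent denominators $q$ and $q^{+}$ scaled by $q^{++}$ with the constant $\tfrac{1}{100}$.
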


\begin{proof}
For each $q \in D(\alpha)$, let $|i_{q}|<q, |j_{q}|<q$ be such that
\begin{equation*}
\epsilon(q) =  q\left\|-t-i_{q}\alpha\right\|, \quad
\theta(q) =  q\left\|\tfrac{1}{2}-t-j_{q}\alpha\right\|.
\end{equation*}
 Then we have from the assumption of the proposition
 \begin{equation*}
\lim_{\substack{q \in  D(\alpha)   \\ q \rightarrow \infty}} \min\left\{q\left\|-t-i_{q}\alpha\right\|,\quad q\left\|\tfrac{1}{2}-t-j_{q}\alpha\right\|\right\}=0.
\end{equation*}
Because $\alpha$ is badly approximable, $\tfrac{q^+}{q}$ and $\tfrac{q^{++}}{q}$  have a uniform upper bound and
\begin{equation*}
\lim_{\substack{q \in  D(\alpha)   \\ q \rightarrow \infty}} \min\left\{q^{++}\left\|-t-i_{q}\alpha\right\|,\quad q^{++}\left\|\tfrac{1}{2}-t-j_{q}\alpha\right\|\right\}=0.
\end{equation*}
Also we have for arbitrary $n_1$ and $n_2$ the following inequalities:
\begin{equation}\label{eq1}
  \left\|n_1\alpha-n_2\alpha\right\|\leq \left\|-t-n_1\alpha\right\|+ \left\|-t-n_2\alpha\right\|, 
\end{equation}
\begin{equation}\label{eq2}
   \left\|\tfrac{1}{2}+n_1\alpha-n_2\alpha\right\|\leq \left\|\tfrac{1}{2}-t-n_1\alpha\right\|+ \left\|-t-n_2\alpha\right\|.
\end{equation}
If we have $q^{++}\left\|-t-i_{q^+}\alpha\right\|<\tfrac{1}{100}$ and $q^{++}\left\|\tfrac{1}{2}-t-j_{q}\alpha\right\|<\tfrac{1}{100}$, then by \eqref{eq2}
\[q^{++}\left\|\tfrac{1}{2}+i_{q^+}\alpha-j_{q}\alpha\right\|<\dfrac{1}{50}.\]
Because
\begin{equation*}
|i_{q^+}-j_{q}|\leq |i_{q^+}|+|j_{q}|<q^++q\leq q^{++},
\end{equation*}
this contradicts  Lemma \ref{continued33}, which asserts that $q^{++}\left\|\tfrac{1}{2}+i_{q^+}\alpha-j_{q}\alpha\right\|\geq\dfrac{1}{24}$.
Hence  we have
\begin{equation*}
\lim_{\substack{q \in  D(\alpha)   \\ q \rightarrow \infty}}  q\left\|-t-i_{q}\alpha\right\|=0
\quad \text{ or } \quad \lim_{\substack{q \in  D(\alpha)   \\ q \rightarrow \infty}}q\left\|\tfrac{1}{2}-t-j_{q}\alpha\right\|=0.
\end{equation*}
Suppose we have $\lim_{\substack{q \in  D(\alpha)   \\ q \rightarrow \infty}}  q\left\|-t-i_{q}\alpha\right\|=0$, then by \eqref{eq1}
\begin{equation*}
\lim_{\substack{q \in  D(\alpha)   \\ q \rightarrow \infty}} q^{++}\left\|i_{q^+}\alpha-i_{q}\alpha\right\|=0.
\end{equation*}
From \eqref{continued22} we derive that for $q$ large enough $i_{q^+}=i_{q}$, that is, $i_{q}$ is constant.
Hence $t \in \Z\alpha$.

Suppose we have $\lim_{\substack{q \in  D(\alpha)   \\ q \rightarrow \infty}}  q\left\|\tfrac{1}{2}-t-j_{q}\alpha\right\|=0$, then
\begin{equation*}
\lim_{\substack{q \in  D(\alpha)   \\ q \rightarrow \infty}} q^{++}\left\|j_{q^+}\alpha-j_{q}\alpha\right\|=0.
\end{equation*}
From \eqref{continued22} we derive that for $q$ large enough $j_{q^+}=j_{q}$, that is, $j_{q}$ is constant.
Hence $t \in \Z\alpha+\tfrac{1}{2}$.
\end{proof}

\begin{remark}
When $\alpha$ is not badly approximable, Merrill \cite{M} showed that if $t$ belongs to an uncountable set of zero measure containing numbers well approximable by multiples of $\alpha$, the  cocycle $v=\chi_{[0,t)}-\chi_{[\tfrac{1}{2},\tfrac{1}{2}+t)}$ is a coboundary.
 This implies $\E^2\left(a,t\right)=\left\{(k,k) \mid k\in \Z \right\}$. Similarly,
 If $t+\tfrac{1}{2}$ belongs to an uncountable set of zero measure containing numbers well approximable by multiples of $\alpha$, then
$\E^2\left(a,t\right)=\left\{(k,-k) \mid k\in \Z \right\}$.
\end{remark}

\subsection*{Acknowledgements}

The author is supported by
Austrian Science Fund (FWF) Grant NFN S9613.
The author thanks Professor Klaus Schmidt for  helpful discussions and the ESI for hospitality and partial support.

\end{document}